\newtheorem{theorem}{Theorem}
\newtheorem{lemma}[theorem]{Lemma}
\newtheorem{corollary}[theorem]{Corollary}
\theoremstyle{definition}
\newtheorem{remark}[theorem]{Remark}
\newtheorem{definition}[theorem]{Definition}
\newtheorem{example}[theorem]{Example}
\let\phi=\varphi
\let\oldbigwedge\bigwedge
\def\BIGwedge{{\textstyle\oldbigwedge}}
\def\medwedge{{\scriptstyle\oldbigwedge}}
\def\bigwedge{\mathchoice{\BIGwedge}{\BIGwedge}{\medwedge}{}}
\let\epsilon=\varepsilon
\begin{document}

\title{On the Anderson-Badawi $\omega_{R[X]}(I[X])=\omega_R(I)$ conjecture}

\author{Peyman Nasehpour}

\address{Peyman Nasehpour, Department of Engineering Science, Faculty of Engineering, University of Tehran, Tehran, Iran}
\email{nasehpour@gmail.com}

\begin{abstract}
Let $R$ be a commutative ring with an identity different from zero and $n$ be a positive integer. Anderson and Badawi, in their paper on $n$-absorbing ideals, define a proper ideal $I$ of a commutative ring $R$ to be an $n$-absorbing ideal of $R$, if whenever $x_1 \cdots x_{n+1} \in I$ for $x_1, \ldots, x_{n+1} \in R$, then there are $n$ of the $x_i$'s whose product is in $I$ and conjecture that $\omega_{R[X]}(I[X])=\omega_R(I)$ for any ideal $I$ of an arbitrary ring $R$, where $\omega_R(I)= \min \{n\colon\text{$I$ is an $n$-absorbing ideal of $R$}\}$. In the present paper, we use content formula techniques to prove that their conjecture is true, if one of the following conditions hold:

\begin{enumerate}

\item The ring $R$ is a Pr\"{u}fer domain.

\item The ring $R$ is a Gaussian ring such that its additive group is torsion-free.

\item The additive group of the ring $R$ is torsion-free and $I$ is a radical ideal of $R$.

\end{enumerate}
\end{abstract}

\maketitle

\section{Introduction}

Let $R$ be a commutative ring with an identity different from zero and $n$ be a positive integer. Anderson and Badawi, in their paper \cite{AB}, define a proper ideal $I$ of a commutative ring $R$ to be an $n$-absorbing ideal of $R$, if whenever $x_1 \cdots x_{n+1} \in I$ for $x_1, \ldots, x_{n+1} \in R$, then there are $n$ of the $x_i$'s whose product is in $I$. In the fourth section of their paper, they conjecture that $\omega_{R[X]}(I[X])=\omega_R(I)$ for any ideal $I$ of an arbitrary ring $R$, where $\omega_R(I)= \min \{n \colon \text{ $I$ is an $n$-absorbing ideal of $R$}\}$\let\thefootnote\relax\footnote{2010 Mathematics Subject Classification: Primary 13A15; Secondary 13B02, 13B25, 13F05.
Keywords: $n$-absorbing ideals, strongly $n$-absorbing ideals, polynomial rings, content algebras, Dedekind-Mertens content formula, Pr\"{u}fer domains, Gaussian algebras, Gaussian rings}.

Clearly a $1$-absorbing ideal is just a prime ideal and it is a well-known result in commutative ring theory that $I$ is a prime ideal of $R$ iff $I[X]$ is a prime ideal of $R[X]$. In \cite[Theorem 4.15]{AB}, it is also proved that $I[X]$ is a $2$-absorbing ideal of $R[X]$ iff $I$ is a $2$-absorbing ideal of $R$.

In this paper, we use content formula techniques to prove that their conjecture is true, i.e., $\omega_{R[X]}(I[X])=\omega_R(I)$ for an ideal $I$ of $R$, if one of the following conditions hold:

\begin{enumerate}

\item The ring $R$ is a Pr\"{u}fer domain.

\item The ring $R$ is a Gaussian ring such that its additive group is torsion-free.

\item The additive group of the ring $R$ is torsion-free and $I$ is a radical ideal of $R$.

\end{enumerate}

Since the content formula techniques for polynomials work for a generalization of these algebras known as content algebras, we recall the concept of content algebras, and then in the first section of this paper, we introduce Gaussian and Armendariz algebras and investigate them a bit. Finally in the second section, we prove that the formula $\omega_B(IB)=\omega_R(I)$ holds for some content algebras that are a generalization of their polynomial versions mentioned above.

Let $R$ be a commutative ring with identity and $B$ an $R$-algebra. For any element $f\in B$, the ideal $c(f) = \bigcap \lbrace I \colon I \text{~is an ideal of~} R \text{~and~} f \in IB \rbrace$ is attributed to it, called the content of $f$. Note that the content function $c$ is nothing but the generalization of the content of a polynomial $f\in R[X]$, which it is the ideal generated by its coefficients. The $R$-algebra $B$ is called a content $R$-algebra if the following conditions hold:

\begin{enumerate}
 \item
For all $f\in B$, $f\in c(f)B$.
 \item
(\textit{Faithful flatness}) $c(rf) = rc(f)$ For any $r \in R$ and $f \in B$, and $c(1_B) = R$.
 \item
(\textit{Dedekind-Mertens content formula}) For all $f,g$ in $B$, there exists a natural number $n$ such that $c(f)^n c(g) = c(f)^{n-1} c(fg)$.
\end{enumerate}

The algebra of all polynomials over an arbitrary ring in an arbitrary number of indeterminates and all semigroup rings whose semigroups are commutative, cancellative, and torsion-free are important and celebrated examples of content algebras (cf. \cite{OR} and \cite{No}). For more on content algebras and their examples, one may refer to \cite{OR}, \cite{R}, and \cite{ES2}, where content modules, content algebras and weak content algebras were introduced and investigated. On the other hand, the Dedekind-Mertens content formula and its generalization have been discussed in other papers like \cite{BG2}, \cite{G1}, \cite{GGP}, \cite{HH}, \cite{LR}, \cite{Na1}, \cite{NaY}, and \cite{P} with different perspectives as well.

 Now it is natural to ask when the simplest form of the Dedekind-Mertens content formula, i.e., $c(fg)=c(f)c(g)$, holds for all $f,g \in B$. It is obvious that if every nonzero finitely generated ideal of the ring $R$ is a cancelation ideal, i.e., $R$ is a Pr\"{u}fer domain, then from the Dedekind-Mertens content formula, we can deduce that $c(fg)=c(f)c(g)$ for all $f,g \in B$. We remind the reader that an ideal $I$ of a ring $R$ is called a cancellation ideal if for all ideals $J,K$ of $R$, $IJ = IK$ implies $J = K$. On the other hand, it is a celebrated result that if $D$ is a domain and $c(fg)=c(f)c(g)$ for all $f,g\in D[X]$, then $D$ is a Pr\"{u}fer domain (cf. \cite{G2} and \cite{T}).

 An arbitrary ring $R$ is called Gaussian if $c(fg)=c(f)c(g)$ for all $f,g \in R[X]$. There are many rings that are not domain, but still Gaussian. For more on Gaussian rings, one may refer to \cite{AC}, \cite{AG}, \cite{AK}, and \cite{BG1}. In the next section, we will define Gaussian algebras and discuss them. The importance of the first section is that it supplies many examples for what we prove in the second section on the Anderson-Badawi $\omega_{R[X]}(I[X])=\omega_R(I)$ conjecture.

 Throughout this paper, all rings are commutative with an identity different from zero. Also note that \emph{iff} always stands for ``if and only if".

\section{Gaussian and Armendariz algebras}

Let $B$ be an $R$-algebra such that $f\in c(f)B$ for all $f\in B$, where by $c(f)$, we mean the ideal $ \bigcap \lbrace I \colon I \text{~is an ideal of~} R \text{~and~} f \in IB \rbrace$. Let $f\in B$. Then $f\in c(f)B$, and this means that $f=\sum a_if_i$, where $a_i \in R$ and $f_i \in B$ and $c(f)=(a_1,a_2,\ldots, a_n)$. Similarly if $g\in B$, then $g=\sum b_jg_j$, where $b_j \in R$ and $g_j \in B$ and $c(g)=(b_1,b_2,\ldots, b_m)$. Then $fg= \sum a_ib_jf_ig_j \in c(f)c(g)B$, and hence $c(fg)\subseteq c(f)c(g)$ \cite[Proposition 1.1, p. 330]{R}. The question of when equality holds is the basis for the following definition:

\begin{definition}
 Let $B$ be an $R$-algebra such that $f\in c(f)B$ for all $f\in B$. We define $B$ to be a \textit{Gaussian $R$-algebra} if $c(fg) = c(f)c(g)$ for all $f,g \in B$.
\end{definition}

\begin{example}

Let $B$ be a content $R$-algebra such that $R$ is a Pr\"{u}fer domain. Since every nonzero finitely generated ideal of $R$ is a cancelation ideal of $R$, the Dedekind-Mertens content formula forces $B$ to be a Gaussian $R$-algebra.

\end{example}

Another example is given in the following remark.

\begin{remark}

\label{Gaussianexample1}

 Let $(R,\textbf{m})$ be a quasi-local ring with $\textbf{m}^2 = (0)$. If $B$ is a content $R$-algebra, then $B$ is a Gaussian $R$-algebra.

\begin{proof}
 Let $f,g \in B$ such that $c(f) \subseteq \textbf{m}$ and $c(g) \subseteq \textbf{m}$, then $c(fg) \subseteq c(f)c(g) \subseteq (0)$, so $c(fg) = c(f)c(g) = (0)$. Otherwise, one of them, say $c(f)$, is $R$ and according to the Dedekind-Mertens content formula, we have $c(fg) = c(g) = c(f)c(g)$.
\end{proof}

\end{remark}

Now we give another interesting class of Gaussian algebras. Recall that a ring $R$ is said to be a B\'{e}zout ring if every finitely generated ideal of $R$ is principal.

\begin{theorem}

\label{Gaussianexample2}

 Let $R$ be a B\'{e}zout ring and $S$ be a commutative, cancellative, torsion-free semigroup. Then $R[S]$ is a Gaussian $R$-algebra.

\end{theorem}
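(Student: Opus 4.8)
The plan is to reduce to the classical ``Gauss lemma over a valuation ring'' by a localization argument, and then to run a leading‑coefficient computation adapted to an ordering of the semigroup $S$. The first step is to record that for $B=R[S]$ the content $c(f)$ is \emph{exactly} the ideal of $R$ generated by the coefficients of $f$: one inclusion is immediate since $f$ lies in the $B$-ideal generated by its coefficients, and the other follows because any expression $f=\sum_k c_k h_k$ with $c_k\in I$ forces every coefficient of $f$ into $I$. In particular $c(f)$ is finitely generated and commutes with localization: for a maximal ideal $\mathfrak m$ of $R$ one has $(R[S])_{\mathfrak m}=R_{\mathfrak m}[S]$ and $c(f)R_{\mathfrak m}=c_{R_{\mathfrak m}[S]}(f)$. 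Since the inclusion $c(fg)\subseteq c(f)c(g)$ always holds (this is recalled just before the definition of Gaussian algebra), the reverse inclusion may be checked after localizing at each maximal ideal of $R$; as $R_{\mathfrak m}$ is again B\'ezout, this reduces the theorem to the case where $R$ is quasi-local and B\'ezout.

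Next I would observe that a quasi-local B\'ezout ring $(R,\mathfrak m)$ has its finitely generated ideals totally ordered by inclusion: given $a,b\in R$, write $(a,b)=(d)$, so $d=xa+yb$, $a=\alpha d$, $b=\beta d$, whence $d=(x\alpha+y\beta)d$; if both $\alpha,\beta\in\mathfrak m$ then $1-x\alpha-y\beta$ is a unit and $d=0$, while otherwise one of $\alpha,\beta$ is a unit, forcing $(b)\subseteq(a)$ or $(a)\subseteq(b)$. Consequently $c(f)=(a)$ and $c(g)=(b)$ are principal and $c(f)c(g)=(ab)$; moreover among the coefficients of $f$ there is one generating $c(f)$ (the finitely many principal ideals they generate are pairwise comparable, so one is largest), and any coefficient of $f$ generating a strictly smaller ideal lies in $\mathfrak m\cdot c(f)$ — and similarly for $g$.

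Then I would fix a total order on $S$: being commutative and cancellative, $S$ embeds in its group of quotients, which is torsion-free abelian and hence carries a translation-invariant total order; restrict it to $S$. Write $f=\sum_{i=1}^{n} a_i\chi^{s_i}$ and $g=\sum_{j=1}^{m} b_j\chi^{t_j}$ with $s_1<\cdots<s_n$ and $t_1<\cdots<t_m$, and let $s_p$ (resp.\ $t_q$) be the least exponent whose coefficient generates $c(f)$ (resp.\ $c(g)$). Because the order is compatible with addition, the only pair $(i,j)$ with $s_i+t_j=s_p+t_q$, $i\ge p$, and $j\ge q$ is $(p,q)$; splitting the coefficient of $\chi^{s_p+t_q}$ in $fg$ according to this together with the cases $i<p$ and $(i\ge p,\ j<q)$, and invoking the end of the previous paragraph, one finds that this coefficient equals $a_pb_q$ modulo $\mathfrak m\cdot(a_pb_q)$. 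Hence $(a_pb_q)=\big(\text{this coefficient}\big)+\mathfrak m(a_pb_q)$, and Nakayama's lemma yields $(a_pb_q)=\big(\text{this coefficient}\big)\subseteq c(fg)$, i.e.\ $c(f)c(g)=(a_pb_q)\subseteq c(fg)$, which is the desired inclusion.

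The main obstacle I anticipate is precisely that $R$ need not be a domain: one cannot simply cancel powers of the principal ideal $c(f)$ in the Dedekind--Mertens formula $c(f)^{n}c(g)=c(f)^{n-1}c(fg)$, since generators of $c(f)$ may be zero-divisors. The leading-coefficient computation combined with Nakayama's lemma is what circumvents this and constitutes the technical heart of the argument; the role of the translation-invariant order on $S$ is only to make the notion of ``leading coefficient'' — and the isolation of the $(p,q)$-term in the product — available in the semigroup-ring setting, exactly as in the classical polynomial case.
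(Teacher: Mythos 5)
Your proof is correct, but it takes a genuinely different route from the paper's. The paper argues globally, with no ordering and no localization: writing $c(g)=(b_1,\ldots,b_n)=(b)$, $b_i=r_ib$, $b=\sum s_ib_i$ and $d=\sum s_ir_i$ (so $b=db$), it picks a fresh exponent $g_{n+1}\in S\setminus\{g_1,\ldots,g_n\}$ and sets $g'=r_1X^{g_1}+\cdots+r_nX^{g_n}+(1-d)X^{g_{n+1}}$; then $g=g'b$ and $c(g')=R$, and since $R[S]$ is a content $R$-algebra (Northcott, Ohm--Rush), the Dedekind--Mertens formula applied with the unit-content factor $g'$ yields $c(fg')=c(f)$, whence $c(fg)=c(fg'b)=c(fg')b=c(f)b=c(f)c(g)$. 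The extra term $(1-d)X^{g_{n+1}}$ is the paper's substitute for your Nakayama step: it is exactly what handles the possibility that $b$ is a zero-divisor, so the obstacle you flag is real but is circumvented there by factorization rather than by a leading-coefficient computation. As for what each approach buys: the paper's proof is a few lines long but leans on the nontrivial fact that $R[S]$ satisfies the Dedekind--Mertens formula, and it uses the B\'{e}zout hypothesis globally; your proof avoids Dedekind--Mertens altogether, at the price of invoking the total orderability of the torsion-free quotient group of $S$ and the compatibility of the content (the coefficient ideal) with localization, and it only ever uses that $R_{\mathfrak m}$ is a chain ring for each maximal ideal $\mathfrak m$ --- so your argument in fact proves the stronger statement that $R[S]$ is a Gaussian $R$-algebra whenever $R$ is an arithmetical ring. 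The only polish needed is to note the trivial cases (e.g.\ when $f$ or $g$ becomes $0$ after localizing); there is no gap.
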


\begin{proof}
 Let $g = b_1X^{g_1}+b_2X^{g_2}+\cdots+b_nX^{g_n}$, where $b_i\in R$ and $g_i \in S$ for all $0\leq i \leq n$. Then there exists a $b \in R$, such that $c(g) = (b_1, b_2,\ldots,b_n) = (b)$. From this, we have $b_i = r_i b$ and $b = \sum s_i b_i$, where $r_i, s_i \in R$. Put $d = \sum s_ir_i$. Then $b = db$. Since $S$ is an infinite set, it is possible to choose $g_{n+1} \in S- \lbrace g_1,g_2,\ldots,g_n \rbrace $.

Let $g^ \prime = r_1X^{g_1}+r_2X^{g_2}+\cdots+r_nX^{g_n}+(1-d)X^{g_{n+1}}$. One can easily check that $g =g^\prime b$, $c(g^ \prime) =R$, and
$c(fg) = c(fg^ \prime b) = c(fg^ \prime)b =c(f)b = c(f)c(g)$ for all $f\in R[S]$.
\end{proof}

Note that the condition on the commutative semigroup $S$, i.e., being a cancellative and torsion-free semigroup, cannot be reduced \cite[Theorem 2]{Na2}.

Though the assertions of the following theorem are in the papers \cite{ES1} and \cite{AK}, we express them in the present paper$\text{'}$s terminology for the reader$\text{'}$s convenience.

\begin{theorem}

\label{Gaussianexample3}

Let $R$ be a ring. Then the following statements hold:

\begin{enumerate}

\item If $R$ is a Noetherian ring, then $R[[X]]$ is a content $R$-algebra.

\item If $R$ is a Dedekind domain, then $R[[X]]$ is a Gaussian $R$-algebra.

\end{enumerate}

\begin{proof}

(1): Let $R[[X]]$ be the ring of formal power series over the Noetherian ring $R$. For $f\in R[[X]]$, let $A_f$ denote the ideal of $R$ generated by the coefficients of $f$ (cf. \cite{F}). It is straightforward to see that $A_f \subseteq I$ iff $f\in I[[X]]$ for every ideal $I$ of $R$ and $f\in R[[X]]$. Since $R$ is a Noetherian ring, $I[[X]] = I\cdot R[[X]]$ for every ideal $I$ of $R$, and so $A_f \subseteq I$ iff $f\in I\cdot R[[X]]$. This implies that $A_f = c(f)$ (refer to the statement 1.2 in \cite{OR}). Obviously $c(rf) = A_{rf}= (r) A_f = rc(f)$ for any $r\in R$ and $f\in R[[X]]$ and $c(1_{R[[X]]})=R$. On the other hand, the Dedekind-Mertens formula holds for formal power series over Noetherian rings (\cite[Theorem 2.6]{ES1}). From the above, we deduce that $R[[X]]$ is a content $R$-algebra.

(2): If $R$ is a Dedekind domain, then every ideal of $R$ is a cancelation ideal, and therefore $c(fg) = c(f)c(g)$ for all $f,g\in R[[X]]$ (also refer to \cite[Theorem 2.4]{AK}) and $R[[X]]$ is a Gaussian $R$-algebra.
\end{proof}

\end{theorem}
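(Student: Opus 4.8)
The plan is to treat the two parts separately, in each case reducing everything to a concrete description of the content function. For part (1), I would first prove that on $R[[X]]$ the abstract content $c(f)$ coincides with the naive content $A_f$, the ideal of $R$ generated by the coefficients of $f$. Comparing coefficients gives at once that $A_f \subseteq I$ iff $f \in I[[X]]$ for every ideal $I$, so the only substantive input is the identity $I[[X]] = I\cdot R[[X]]$; this is where Noetherianness is used, since finite generation of $I$ lets one express any power series with coefficients in $I$ as an $R[[X]]$-combination of finitely many generators of $I$. Granting this, $c(f) = \bigcap\{I : f \in I\cdot R[[X]]\} = \bigcap\{I : A_f \subseteq I\} = A_f$, and then $f \in A_f[[X]] = A_f\cdot R[[X]] = c(f)R[[X]]$ verifies the first content-algebra axiom, while $c(rf) = A_{rf} = rA_f = rc(f)$ and $c(1) = R$ give faithful flatness. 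The remaining Dedekind--Mertens axiom is exactly the power-series Dedekind--Mertens formula over a Noetherian ring, which I would quote from \cite{ES1} rather than reprove.

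For part (2), I would deduce $c(fg) = c(f)c(g)$ from the Dedekind--Mertens formula supplied by part (1) together with the cancellation property enjoyed by ideals in a Dedekind domain. One always has $c(fg) \subseteq c(f)c(g)$; multiplying by $c(f)^{n-1}$ and combining with $c(f)^n c(g) = c(f)^{n-1}c(fg)$ forces $c(f)^{n-1}c(fg) = c(f)^{n-1}c(f)c(g)$, and since $c(f)$ is a cancellation ideal whenever it is nonzero (the case $c(f) = 0$ forcing $f = 0$ and hence being trivial), cancelling $c(f)^{n-1}$ yields the Gaussian identity. This argument uses nothing about Dedekind domains beyond the fact that every ideal is a cancellation ideal, so in spirit it is the same observation that appears in the Pr\"{u}fer-domain example earlier in the section; the Dedekind hypothesis (equivalently, Noetherian plus Pr\"{u}fer domain) is needed only to guarantee, via part (1), that $R[[X]]$ is a content $R$-algebra in the first place, so that the Dedekind--Mertens formula is available.

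The main obstacle lies entirely inside part (1): the clean identification $c(f) = A_f$ depends on the Noetherian hypothesis through $I[[X]] = I\cdot R[[X]]$, and the power-series Dedekind--Mertens formula is a genuine theorem rather than a formal consequence of the polynomial case, so at that point the argument must appeal to external results. Once those two facts are secured, the rest — the verification of the content-algebra axioms in (1) and the cancellation step in (2) — is routine bookkeeping.
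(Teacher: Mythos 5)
Your proposal is correct and follows essentially the same route as the paper: identify the abstract content $c(f)$ with the coefficient ideal $A_f$ via $I[[X]]=I\cdot R[[X]]$ (the Noetherian input), quote the power-series Dedekind--Mertens theorem from \cite{ES1} for the remaining axiom, and in (2) obtain $c(fg)=c(f)c(g)$ by cancelling $c(f)^{n-1}$, using that nonzero ideals of a Dedekind domain are cancellation ideals. The only difference is that you write out the cancellation step explicitly (including the trivial case $c(f)=0$), which the paper handles by a brief assertion and the citation \cite[Theorem 2.4]{AK}.
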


We next define Armendariz algebras and show their relationship with Gaussian algebras. Armendariz rings were introduced in \cite{RC}. A ring $R$ is said to be an Armendariz ring if for all $f,g \in R[X]$ with $f = a_0+a_1X+\cdots+a_nX^n$ and $g = b_0+b_1X+\cdots+b_mX^m$, $fg = 0$ implies $a_ib_j = 0$ for all $0 \leq i \leq n$ and $0 \leq j \leq m$. This is equivalent to saying that if $fg = 0$, then $c(f)c(g) = 0$, and is our inspiration for defining Armendariz algebras.

\begin{definition}
 Let $B$ be an $R$-algebra such that $f\in c(f)B$ for all $f\in B$. We say $B$ is an \textit{Armendariz $R$-algebra} if $fg = 0$ implies $c(f)c(g) = (0) $ for all $f,g \in B$.
\end{definition}

An $R$-algebra $B$ is called a weak content algebra if $f\in c(f)B$ and $c(f)c(g) \subseteq \sqrt{c(fg)}$ for all $f,g \in B$ (\cite{R}). For example, if $B$ is a weak content $R$-algebra and $R$ is a reduced ring, then $B$ is an Armendariz $R$-algebra. This is because if $fg=0$, then $c(f)c(g) \subseteq \sqrt{c(fg)}=\sqrt{(0)}=(0)$.

\begin{theorem}
 Let $R$ be a ring, $(0)$ a $\textbf{p}$-primary ideal of $R$ such that $\textbf{p}^2 = (0)$, and $B$ a content $R$-algebra. Then $B$ is an Armendariz $R$-algebra.

\begin{proof}
 Let $f,g \in B$, where $fg = 0$. If $f = 0$ or $g = 0$, then $c(f)c(g) = 0$. Otherwise, suppose that $f \not= 0$ and $g \not= 0$. Therefore $f$ and $g$ are both zero-divisors of $B$. Since $(0)$ is a $\textbf{p}$-primary ideal of $R$, $(0)$ is a $\textbf{p}B$-primary ideal of $B$ \cite[p. 331]{R}, and therefore $\textbf{p}B$ is the set of zero-divisors of $B$. So $f,g \in \textbf{p}B$, and this means that $c(f) \subseteq \textbf{p}$ and $c(g) \subseteq \textbf{p}$. Finally, $c(f)c(g) \subseteq \textbf{p}^2 = (0)$.
\end{proof}

\end{theorem}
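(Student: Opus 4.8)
The plan is to reduce the Armendariz condition to a statement about the prime ideal attached to the zero ideal of $B$. Suppose $f,g\in B$ with $fg=0$; I want to conclude $c(f)c(g)=(0)$. The cases $f=0$ or $g=0$ are immediate, since then one of the two contents is already $(0)$, so I may assume $f\neq 0$ and $g\neq 0$, which forces both $f$ and $g$ to be zero-divisors of $B$.

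The key input is the behaviour of primary ideals under extension along a content algebra. Since $B$ is a content $R$-algebra it is, in particular, faithfully flat over $R$ and obeys the Dedekind-Mertens content formula, and for such algebras extension sends a $\textbf{p}$-primary ideal of $R$ to a $\textbf{p}B$-primary ideal of $B$ --- this is the analogue, for primary ideals, of the classical fact that a prime $\textbf{q}$ of $R$ extends to a prime $\textbf{q}B$ of $B$, and it is available in Rush's paper. Applying this to the zero ideal, $(0)=(0)B$ is $\textbf{p}B$-primary in $B$; hence $\sqrt{(0)B}=\textbf{p}B$, and every zero-divisor of $B$ lies in $\textbf{p}B$ (if $xy=0$ with $y\neq 0$ then, by primariness of $(0)B$, $x\in\sqrt{(0)B}=\textbf{p}B$). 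In particular $f,g\in\textbf{p}B$.

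Finally, from the defining formula $c(f)=\bigcap\{I\colon f\in IB\}$, the membership $f\in\textbf{p}B$ yields $c(f)\subseteq\textbf{p}$, and likewise $c(g)\subseteq\textbf{p}$. Therefore $c(f)c(g)\subseteq\textbf{p}^2=(0)$, which is exactly the Armendariz condition, and the proof is complete.

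The only genuinely non-formal ingredient is the extension-of-primary-ideals statement for content algebras; everything after it is bookkeeping with the definition of content and the hypothesis $\textbf{p}^2=(0)$. So the step I would expect to single out (and to cite rather than reprove) is precisely that the content algebra axioms force $(0)B$ to be $\textbf{p}B$-primary --- equivalently, that the associated prime of the zero ideal is preserved under extension to $B$ --- since that is where faithful flatness and the Dedekind-Mertens formula are actually used.
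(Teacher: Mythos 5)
Your proposal is correct and follows essentially the same route as the paper: both reduce to the nontrivial case $f,g\neq 0$, invoke Rush's result that $(0)$ being $\textbf{p}$-primary in $R$ makes $(0)$ a $\textbf{p}B$-primary ideal of $B$, conclude that the zero-divisors $f,g$ lie in $\textbf{p}B$, hence $c(f),c(g)\subseteq\textbf{p}$, and finish with $c(f)c(g)\subseteq\textbf{p}^2=(0)$. Your explicit derivation that zero-divisors lie in $\textbf{p}B$ directly from primariness is just a spelled-out version of the paper's assertion that $\textbf{p}B$ is the set of zero-divisors of $B$.
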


In order to characterize Gaussian algebras in terms of Armendariz algebras, we mention the following useful lemma.

\begin{lemma}
 Let $R$ be a ring and $I$ an ideal of $R$. If $B$ is a Gaussian $R$-algebra, then $B/IB$ is a Gaussian $(R/I)$-algebra.

 \begin{proof}
 Straightforward.
 \end{proof}

\end{lemma}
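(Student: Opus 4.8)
The plan is to reduce everything to one observation: for $f\in B$, writing $\bar f$ for the image of $f$ in $B/IB$ and $\pi\colon R\to R/I$ for the canonical surjection, the content of $\bar f$ as an element of the $(R/I)$-algebra $B/IB$ is the image of the content over $R$, namely $c_{R/I}(\bar f)=\pi(c_R(f))=(c_R(f)+I)/I$. First I would record the bookkeeping that makes this precise. Every ideal of $R/I$ has the form $\mathfrak a/I$ with $\mathfrak a$ an ideal of $R$ containing $I$, and $(\mathfrak a/I)(B/IB)=(\mathfrak aB+IB)/IB=\mathfrak aB/IB$, the last equality because $I\subseteq\mathfrak a$ gives $IB\subseteq\mathfrak aB$; hence $\bar f\in(\mathfrak a/I)(B/IB)$ iff $f\in\mathfrak aB$.

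Next I would prove the content formula $c_{R/I}(\bar f)=(c_R(f)+I)/I$. On the one hand $c_R(f)+I$ is an ideal containing $I$, and by axiom (1) for Gaussian algebras $f\in c_R(f)B\subseteq(c_R(f)+I)B$, so $\mathfrak a=c_R(f)+I$ is among the ideals whose intersection defines $c_{R/I}(\bar f)$. On the other hand, if $\mathfrak a\supseteq I$ and $f\in\mathfrak aB$, then $c_R(f)\subseteq\mathfrak a$ by definition of $c_R(f)$ as the intersection of all ideals $J$ with $f\in JB$, hence $c_R(f)+I\subseteq\mathfrak a$. Intersecting over all such $\mathfrak a$ yields $c_{R/I}(\bar f)=(c_R(f)+I)/I$. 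In particular $f\in c_R(f)B$ gives $\bar f\in(c_R(f)B+IB)/IB=\big((c_R(f)+I)B\big)/IB=c_{R/I}(\bar f)(B/IB)$, so $B/IB$ is indeed an $(R/I)$-algebra of the type for which being Gaussian is defined.

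With the formula in hand the Gaussian identity is immediate. For $f,g\in B$ one has $c_{R/I}(\bar f)\,c_{R/I}(\bar g)=\big((c_R(f)+I)/I\big)\big((c_R(g)+I)/I\big)=(c_R(f)c_R(g)+I)/I$, the last step because the product ideal in $R/I$ is generated by the classes $\overline{ac}$ with $a\in c_R(f)$ and $c\in c_R(g)$, i.e.\ it is the image of $c_R(f)c_R(g)$. Since $B$ is Gaussian over $R$, $c_R(fg)=c_R(f)c_R(g)$, so
\[ c_{R/I}(\bar f\,\bar g)=c_{R/I}(\overline{fg})=(c_R(fg)+I)/I=(c_R(f)c_R(g)+I)/I=c_{R/I}(\bar f)\,c_{R/I}(\bar g), \]
which is what we want.

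There is no genuine obstacle: the whole argument is formal once one has the content transfer formula $c_{R/I}(\bar f)=(c_R(f)+I)/I$, and the only point requiring a little care is the identification of ideals of $R/I$ and their extensions to $B/IB$ with ideals of $R$ lying between $I$ and $R$ and their extensions to $B$ — that is, verifying the extension-and-contraction statements cleanly enough to pin down $c_{R/I}(\bar f)$. Everything after that is a one-line computation, which is presumably why the authors call the proof straightforward.
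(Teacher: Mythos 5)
Your proof is correct and is exactly the natural filling-in of the paper's ``straightforward'' proof: the whole lemma reduces to the content transfer formula $c_{R/I}(\bar f)=(c_R(f)+I)/I$, which you establish cleanly using $f\in c_R(f)B$ for one inclusion and the definition of content as an intersection for the other, and you also rightly check that $\bar f\in c_{R/I}(\bar f)(B/IB)$ so that the Gaussian condition even makes sense for $B/IB$. Nothing is missing.
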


\begin{theorem}
 Let $B$ be a content $R$-algebra. Then $B$ is a Gaussian $R$-algebra iff $B/IB$ is an Armendariz $(R/I)$-algebra for every ideal $I$ of $R$.

\begin{proof}
 $ ( \Rightarrow ): $ According to the above lemma, since $B$ is a Gaussian $R$-algebra, $B/IB$ is a Gaussian $(R/I)$-algebra. On the other hand, any Gaussian algebra is an Armendariz algebra and this completes the proof.

$ ( \Leftarrow ): $ In the beginning of this section, we proved that if $B$ is an $R$-algebra such that $f\in c(f)B$ for all $f\in B$, then  $c(fg) \subseteq c(f)c(g)$ for all $f,g \in B$ \cite[Proposition 1.1, p. 330]{R}. Therefore, we need to prove that $c(f)c(g) \subseteq c(fg)$. Put $I = c(fg)$. Since $B/IB$ is an Armendariz $(R/I)$-algebra and $c(fg+IB) = I$, we have $c(f+IB)c(g+IB) = I$, and this means that $c(f)c(g) \subseteq c(fg)$.
\end{proof}

\end{theorem}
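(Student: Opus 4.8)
The plan is to derive both directions from the preceding Lemma (that the Gaussian property descends to $B/IB$), from the inclusion $c(fg)\subseteq c(f)c(g)$ recalled at the start of this section, and from the bookkeeping identity that taking contents commutes with reduction modulo $I$, namely $c_{R/I}(f+IB)=(c(f)+I)/I$ for every $f\in B$ and every ideal $I$ of $R$.

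For the implication $(\Rightarrow)$ I would first record that every Gaussian $R$-algebra is Armendariz: if $fg=0$, then $c(f)c(g)=c(fg)=c(0)=(0)$, the last equality because $0\in (0)\cdot B$ forces $c(0)=(0)$. Then, given an arbitrary ideal $I$ of $R$, the Lemma makes $B/IB$ a Gaussian $(R/I)$-algebra, and applying the observation just made over the ground ring $R/I$ shows that $B/IB$ is an Armendariz $(R/I)$-algebra. Since $I$ is arbitrary, this direction is done.

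For $(\Leftarrow)$, I would take $f,g\in B$ and note that $c(fg)\subseteq c(f)c(g)$ is already available (it needs only $f\in c(f)B$), so only $c(f)c(g)\subseteq c(fg)$ remains. Set $I:=c(fg)$. Since $B$ is a content algebra, $fg\in c(fg)B=IB$, so $(f+IB)(g+IB)=fg+IB=0$ in $B/IB$; the Armendariz hypothesis over $R/I$ then gives $c_{R/I}(f+IB)\,c_{R/I}(g+IB)=(0)$. Feeding in the reduction identity for $f$ and for $g$, the left-hand side equals $(c(f)c(g)+I)/I$, so $c(f)c(g)\subseteq I=c(fg)$, as wanted.

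The one step that really needs an argument rather than formal manipulation is the reduction identity $c_{R/I}(f+IB)=(c(f)+I)/I$; this is where I would invoke that a quotient of a content algebra is again a content algebra (cf. \cite{R}), or else verify it by hand: the ideals of $R/I$ are exactly the $J/I$ with $I\subseteq J$, for such $J$ one has $(J/I)(B/IB)=JB/IB$, hence $f+IB\in (J/I)(B/IB)$ iff $f\in JB$ iff $c(f)\subseteq J$, and intersecting over all such $J$ yields $c_{R/I}(f+IB)=\bigcap\{J/I : c(f)+I\subseteq J\}=(c(f)+I)/I$. I expect this identity to be the main obstacle; everything else in the proof is formal.
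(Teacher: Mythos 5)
Your proposal is correct and follows essentially the same route as the paper: the forward direction via the preceding Lemma together with the observation that Gaussian implies Armendariz, and the converse by setting $I=c(fg)$ and applying the Armendariz hypothesis in $B/IB$. Your explicit verification of the reduction identity $c_{R/I}(f+IB)=(c(f)+I)/I$ is exactly the step the paper uses implicitly (via the behavior of content algebras under quotients), so it fills in rather than diverges from the paper's argument.
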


The two recent theorems are generalizations of the similar theorems for polynomial rings in \cite{AC}.

After this short introductory section on Gaussian algebras, we pass to the next section to discuss the Anderson-Badawi $\omega_{R[X]}(I[X])=\omega_R(I)$ conjecture.

\section{Anderson-Badawi $\omega_{R[X]}(I[X])=\omega_R(I)$ conjecture}

The concept of $2$-absorbing ideals was introduced and investigated in \cite{B}. This concept has been generalized for any positive integer $n$ by Anderson and Badawi. In their paper \cite{AB}, a proper ideal $I$ of a commutative ring $R$ is defined as an $n$-absorbing ideal of $R$ if whenever $x_1 \cdots x_{n+1} \in I$ for $x_1, \ldots, x_{n+1} \in R$, then there are $n$ of the $x_i$'s whose product is in $I$. In the final section of the paper \cite{AB}, the authors define a strongly $n$-absorbing ideal of a ring as follows: A proper ideal $I$ of a commutative ring $R$ is called a strongly $n$-absorbing ideal if whenever $I_1 \cdots I_{n+1} \subseteq I$ for ideals $I_1, \ldots, I_{n+1}$ of $R$, then there are $n$ of the $I_i$'s whose product is contained in $I$.

Clearly a $1$-absorbing ideal is just a prime ideal, and it is a famous result in commutative ring theory that $I$ is a prime ideal of $R$ iff $I[X]$ is a prime ideal of $R[X]$. In \cite[Theorem 4.15]{AB}, it is also proved that $I[X]$ is a $2$-absorbing ideal of $R[X]$ iff $I$ is a $2$-absorbing ideal of $R$. One can easily check that if an ideal is a strongly $n$-absorbing ideal of $R$, then it is an $n$-absorbing ideal of $R$, and Anderson and Badawi in \cite{AB} conjectured that these two concepts are equivalent and they showed that the two concepts are equivalent for Pr\"{u}fer domains \cite[Corollary 6.9]{AB}.

In the same paper, Anderson and Badawi also conjecture that $\omega_{R[X]}(I[X])=\omega_R(I)$ for any ideal $I$ of an arbitrary ring $R$, where $\omega_R(I)= \min \{n \colon \text{ $I$ is an $n$-absorbing ideal of $R$}\}$. In the following, we prove that Anderson-Badawi $\omega_{R[X]}(I[X])=\omega_R(I)$ conjecture holds for Pr\"{u}fer domains. Actually, we prove a generalization of this formula for content algebras over Pr\"{u}fer domains.

\begin{theorem}

\label{ABconjecture}

Let $R$ be a Pr\"{u}fer domain, $I$ an ideal of $R$, and $B$ a content $R$-algebra. Then $\omega_B(IB) = \omega_R(I)$.

\begin{proof}
Let $B$ be a content $R$-algebra. Then it is easy to see that $R$ can be considered as a subring of $B$. This means that if $I$ is an ideal of $R$, then $\omega_R(IB \cap R) \le \omega_B(IB)$ by \cite[Corollary 4.3]{AB}. But $IB \cap R = I$ for any ideal $I$ of $R$, since $B$ is a content $R$-algebra. Therefore $\omega_R(I) \le \omega_B(IB)$.

It is obvious that $\omega_R(I)=0$ iff $\omega_B(IB) = 0$, since $\omega_R(I)=0$ iff $I=R$ for any ideal $I$ of $R$, according to its definition in \cite{AB}. Also note that in content algebras, $IB=B$ iff $I=R$. Now let $\omega_R(I)=n$ for a positive integer $n$. We claim that $IB$ is an $n$-absorbing ideal of $B$. Since $R$ is a Pr\"{u}fer domain and $B$ is a content $R$-algebra, $B$ is a Gaussian $R$-algebra. Now assume that $f_1 \cdots f_{n+1} \in IB$ for arbitrary $f_1, \ldots, f_{n+1} \in B$. It is clear that $c(f_1 \cdots f_{n+1}) \subseteq I$. But $B$ is a Gaussian $R$-algebra, so $c(f_1 \cdots f_{n+1})=c(f_1) \cdots c(f_{n+1})$. On the other hand, by \cite[Corollary 6.9]{AB}, $I$ is a strongly $n$-absorbing ideal of $R$ and this implies $c(f_1) \cdots c(f_{i-1})c(f_{i+1}) \cdots c(f_{n+1}) \subseteq I$ for some $i$ with $1 \leq i \leq n+1$. Therefore, $c(f_1 \cdots f_{i-1} f_{i+1} \cdots f_{n+1}) \subseteq I$, and finally $f_1 \cdots f_{i-1} f_{i+1} \cdots f_{n+1} \in IB$. So we have already proved that $n=\omega_R(I) \le \omega_B(IB) \le n$. Now let $\omega_B(IB)=n$ for a positive integer $n$. First we prove that $I$ is an $n$-absorbing ideal of $R$. To show that, we let $a_1 \cdots a_{n+1} \in I$. Then $a_1 \cdots a_{n+1} \in IB$, since $I\subseteq IB$. But $IB$ is an $n$-absorbing ideal of $B$ and therefore $a_1 \cdots a_{i-1}a_{i+1} \cdots a_{n+1} \in IB$ for some $i$ with $1 \leq i \leq n+1$. Since $IB \cap R =I$ and $a_1 \cdots a_{i-1}a_{i+1} \cdots a_{n+1} \in R$, we have $a_1 \cdots a_{i-1}a_{i+1} \cdots a_{n+1} \in I$. This means that $\omega_R(I)$ is finite and nonzero. So we let $\omega_R(I)=m$ be a positive integer and according to what we proved in above, we have $n=\omega_B(IB) = \omega_R(I)=m$. From what we said, we conclude that $\omega_B(IB) = \infty$ iff $\omega_R(I) = \infty$, and the proof is complete.
\end{proof}

\end{theorem}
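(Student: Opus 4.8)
The plan is to prove the two inequalities $\omega_R(I)\le\omega_B(IB)$ and $\omega_B(IB)\le\omega_R(I)$ separately, with the bulk of the work going into the second one, and then to patch together the finite cases to also cover the value $\infty$. For the first inequality I would use that in a content $R$-algebra $R$ sits inside $B$ as a subring and $IB\cap R=I$; combining this with \cite[Corollary 4.3]{AB} (which says $\omega_R(J\cap R)\le\omega_S(J)$ whenever $R\subseteq S$) gives $\omega_R(I)=\omega_R(IB\cap R)\le\omega_B(IB)$ immediately, with no hypotheses on $R$ needed. The trivial case is then disposed of by noting that $\omega_R(I)=0$ exactly when $I=R$, that $IB=B$ iff $I=R$ in a content algebra, and $\omega_B(IB)=0$ iff $IB=B$.

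The heart of the argument is the reverse inequality when $\omega_R(I)=n$ is a positive integer: I want to show $IB$ is an $n$-absorbing ideal of $B$. Here is where the Pr\"{u}fer hypothesis does double duty. First, since $R$ is a Pr\"{u}fer domain and $B$ is a content $R$-algebra, the Dedekind--Mertens formula collapses to $c(f_1\cdots f_{n+1})=c(f_1)\cdots c(f_{n+1})$, i.e.\ $B$ is a Gaussian $R$-algebra (this is the Example following the definition of Gaussian algebra). Second, by \cite[Corollary 6.9]{AB} over a Pr\"{u}fer domain an $n$-absorbing ideal is \emph{strongly} $n$-absorbing, so $I$ is strongly $n$-absorbing. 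Now take $f_1,\dots,f_{n+1}\in B$ with $f_1\cdots f_{n+1}\in IB$. By the definition of content this forces $c(f_1\cdots f_{n+1})\subseteq I$, and by the Gaussian property $c(f_1)\cdots c(f_{n+1})\subseteq I$. Strong $n$-absorption of $I$ then yields an index $i$ with $c(f_1)\cdots c(f_{i-1})c(f_{i+1})\cdots c(f_{n+1})\subseteq I$; reassembling via the Gaussian identity gives $c(f_1\cdots f_{i-1}f_{i+1}\cdots f_{n+1})\subseteq I$, hence $f_1\cdots f_{i-1}f_{i+1}\cdots f_{n+1}\in c(f_1\cdots f_{i-1}f_{i+1}\cdots f_{n+1})B\subseteq IB$. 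Thus $IB$ is $n$-absorbing and $\omega_B(IB)\le n=\omega_R(I)$, which together with the first inequality gives equality whenever $\omega_R(I)$ is finite.

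To finish, I would handle the converse direction to rule out the remaining possibility. Suppose $\omega_B(IB)=n$ is a positive integer; I claim $\omega_R(I)$ is finite. Given $a_1\cdots a_{n+1}\in I\subseteq IB$, since $IB$ is $n$-absorbing some $(n)$-fold subproduct $a_1\cdots a_{i-1}a_{i+1}\cdots a_{n+1}$ lies in $IB$, and being an element of $R$ it lies in $IB\cap R=I$; hence $I$ is $n$-absorbing and $\omega_R(I)$ is finite. Applying the finite-case equality already established then forces $\omega_B(IB)=\omega_R(I)$. Putting everything together, $\omega_B(IB)$ is finite iff $\omega_R(I)$ is finite and then they coincide, so $\omega_B(IB)=\infty$ iff $\omega_R(I)=\infty$ as well, completing the proof. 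The main obstacle is the step that upgrades ``$n$ of the \emph{elements} $f_j$ have product in $IB$'' to a statement about \emph{ideals}: this is exactly what fails for a general domain and is rescued here only because \cite[Corollary 6.9]{AB} makes $n$-absorbing and strongly $n$-absorbing coincide over Pr\"{u}fer domains, which is why the result is stated in this generality rather than for arbitrary rings.
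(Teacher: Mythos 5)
Your proposal is correct and follows essentially the same route as the paper's own proof: the lower bound via the subring embedding, $IB\cap R=I$, and \cite[Corollary 4.3]{AB}; the upper bound via the Gaussian property of content algebras over Pr\"{u}fer domains combined with \cite[Corollary 6.9]{AB} to pass from elements to contents and back; and the same finite/infinite bookkeeping at the end. No gaps to report.
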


\begin{corollary}

Let $R$ be a domain. Then the following statements hold.

\begin{enumerate}

\item If $R$ is a Pr\"{u}fer domain, then $\omega_{R[X]}(I[X]) = \omega_R(I)$ for every ideal $I$ of $R$.

\item If $R$ is a Dedekind domain, then $\omega_{R[[X]]}(I[[X]]) = \omega_R(I)$ for every ideal $I$ of $R$.

\end{enumerate}

\end{corollary}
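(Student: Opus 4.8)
The plan is to obtain both statements as direct specializations of Theorem~\ref{ABconjecture}, by exhibiting the relevant polynomial and power series rings as content $R$-algebras and then identifying the extended ideals $IB$ with $I[X]$ and $I[[X]]$ respectively.

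For part (1), I would take $B = R[X]$. The polynomial ring in one indeterminate over an arbitrary ring is a content $R$-algebra — this is the prototypical example recalled in the introduction — and for any ideal $I$ of $R$ one has $IB = I\cdot R[X] = I[X]$, since $I\cdot R[X]$ consists precisely of those polynomials all of whose coefficients lie in $I$. As $R$ is a Pr\"{u}fer domain, Theorem~\ref{ABconjecture} applies with this $B$ and gives $\omega_{R[X]}(I[X]) = \omega_B(IB) = \omega_R(I)$.

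For part (2), I would take $B = R[[X]]$. Every Dedekind domain is Noetherian, so by Theorem~\ref{Gaussianexample3}(1) the power series ring $R[[X]]$ is a content $R$-algebra; moreover, Noetherianness gives $I[[X]] = I\cdot R[[X]] = IB$ for every ideal $I$ of $R$, which is exactly the identification already used in the proof of Theorem~\ref{Gaussianexample3}(1). Since a Dedekind domain is in particular a Pr\"{u}fer domain, Theorem~\ref{ABconjecture} again applies and yields $\omega_{R[[X]]}(I[[X]]) = \omega_B(IB) = \omega_R(I)$.

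Since the corollary is essentially a translation of Theorem~\ref{ABconjecture} into these two concrete settings, I do not expect a genuine obstacle. The only points requiring attention are the bookkeeping facts that $IB$ coincides with $I[X]$ (immediate) and with $I[[X]]$ (where one invokes that $R$ is Noetherian so that $I[[X]] = I\cdot R[[X]]$), together with the standard observation that a Dedekind domain is a Pr\"{u}fer domain; once these are noted the conclusion follows at once.
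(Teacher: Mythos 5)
Your proposal is correct and is exactly the argument the paper intends (the corollary is stated without proof as a direct specialization of Theorem~\ref{ABconjecture}): take $B=R[X]$, a content $R$-algebra with $IB=I[X]$, and $B=R[[X]]$, a content $R$-algebra by Theorem~\ref{Gaussianexample3}(1) since a Dedekind domain is Noetherian and also Pr\"{u}fer, with $IB=I[[X]]$. Nothing is missing.
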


\begin{remark}

\label{ABconjecture2}

Recall that Anderson and Badawi conjectured that the two concepts of $n$-absorbing ideal and strongly $n$-absorbing ideal are equivalent (\cite[Conjecture 1]{AB}). In \cite{DP}, A. Y. Darani and E. R. Puczy{\l}owski show that this conjecture holds for rings whose additive group is torsion-free. On the other hand, if $R$ is a ring such that every $n$-absorbing ideal of $R$ is strongly $n$-absorbing and $B$ is a faithfully flat Gaussian $R$-algebra, then a proof similar to the proof of Theorem \ref{ABconjecture} shows that $\omega_B(IB) = \omega_R(I)$. So we have the following result:

\begin{corollary}

If $R$ is a Gaussian ring and its additive group is torsion-free, then $\omega_{R[X]}(I[X])=\omega_R(I)$ for every ideal $I$ of $R$.

\end{corollary}

\end{remark}

\begin{example} In Theorem \ref{ABconjecture}, we proved that if $R$ is a Pr\"{u}fer domain, then $\omega_{R[X]}(I[X])=\omega_R(I)$ for every ideal $I$ of $R$. In the following, we give an example of a ring $S$ satisfying $\omega_{S[X]}(I[X])=\omega_S(I)$, while the ring $S$ is not a domain. Let $k$ be a field with characteristic 0 and put $R=k[[X_1,\ldots,X_n]]$. Then $R$ is a local ring with the maximal ideal $\textbf{m}=(X_1,\ldots,X_n)$. We consider the ring $S=R/\textbf{m}^2$. It is easy to check that $(S,\textbf{n})$ is a local ring with $\textbf{n}^2=(0)$, where $\textbf{n}=\textbf{m}/\textbf{m}^2$. Therefore according to Remark \ref{Gaussianexample1}, $S$ is a Gaussian ring. On the other hand, since the characteristic of the field $k$ is 0, the additive group of the ring $S$ is torsion-free, and finally $\omega_{S[X]}(I[X]) = \omega_S(I)$ for every ideal $I$ of $S$, while $S$ is not a domain.

\end{example}

\begin{theorem}

Let $B$ be a content $R$-algebra and $R$ be a ring such that every $n$-absorbing ideal of $R$ is a strongly $n$-absorbing ideal of $R$ for any positive integer $n$ (for example, let the additive group of $R$ be torsion free (\cite[Theorem 4.2]{DP})). If $I$ is a radical ideal of $R$, then $\omega_B(IB) = \omega_R(I)$.

\begin{proof}
We just need to prove that if $\omega_R(I)=n$ for a positive integer $n$, then $IB$ is an $n$-absorbing ideal of $B$, since the rest of the proof is similar to the proof of Theorem \ref{ABconjecture}. So let $f_1 \cdots f_{n+1} \in IB$. Obviously $c(f_1 \cdots f_{n+1}) \subseteq I$. Let $g=f_2 \cdots f_{n+1}$. By the Dedekind-Mertens content formula for content algebras, there is a natural number $l_1$ such that $c(f_1)^{l_1}c(g) = c(f_1)^{{l_1}-1}c(f_1 g)$ and since $c(f_1 g) \subseteq I$, we have $c(f_1)^{l_1}c(g) \subseteq I$. Continuing this process, we get the natural numbers $l_2, \ldots, l_{n}$ such that $c(f_1)^{l_1} \cdots c(f_{n})^{l_{n}}c(f_{n+1}) \subseteq I$. Obviously, if we let $l=\max\{l_1, \cdots, \l_n\}$, then $(c(f_1) \cdots c(f_{n+1}))^l \subseteq I$, and since $I=\sqrt{I}$, we have $c(f_1) \cdots c(f_{n+1}) \subseteq I$.

But $\omega_R(I)=n$. So $I$ is an $n$-absorbing ideal and according to our assumptions, a strongly $n$-absorbing ideal of $R$. Thus $c(f_1) \cdots c(f_{i-1})c(f_{i+1}) \cdots c(f_{n+1}) \subseteq I$ for some $i$ with $1 \leq i \leq n+1$.

On the other hand, $c(f_1 \cdots f_{i-1} f_{i+1} \cdots f_{n+1}) \subseteq c(f_1) \cdots c(f_{i-1})c(f_{i+1}) \cdots c(f_{n+1})$, hence $f_1 \cdots f_{i-1} f_{i+1} \cdots f_{n+1} \in IB$.
\end{proof}

\end{theorem}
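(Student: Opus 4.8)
The plan is to follow the template already laid out in Theorem \ref{ABconjecture}, so that the only genuinely new work is showing that $IB$ is an $n$-absorbing ideal of $B$ whenever $\omega_R(I)=n$. The ambient bookkeeping — that $R$ embeds in $B$, that $IB\cap R=I$, that $IB=B$ iff $I=R$, that $\omega_R(I)\le\omega_B(IB)$ via \cite[Corollary 4.3]{AB}, and the reverse passage from $\omega_B(IB)=n$ back down to $\omega_R(I)$ using $IB\cap R=I$ — is all word-for-word as before and can be invoked rather than rewritten. Likewise the equivalences $\omega_R(I)=0\Leftrightarrow\omega_B(IB)=0$ and $\omega_R(I)=\infty\Leftrightarrow\omega_B(IB)=\infty$ carry over verbatim. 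So I would open by saying ``the rest of the proof is identical to that of Theorem \ref{ABconjecture}'' and then concentrate on the single implication.

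For that implication, start from $f_1\cdots f_{n+1}\in IB$, so $c(f_1\cdots f_{n+1})\subseteq I$. The first key step is to promote this to $c(f_1)\cdots c(f_{n+1})\subseteq I$. Unlike the Gaussian case, we do not have $c$ multiplicative; instead we use the Dedekind–Mertens content formula iteratively. Setting $g=f_2\cdots f_{n+1}$, there is $l_1$ with $c(f_1)^{l_1}c(g)=c(f_1)^{l_1-1}c(f_1 g)\subseteq I$; peeling off $f_2$ from $g$ and repeating, one obtains exponents $l_1,\dots,l_n$ with $c(f_1)^{l_1}\cdots c(f_n)^{l_n}c(f_{n+1})\subseteq I$. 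Taking $l=\max\{l_1,\dots,l_n\}$ gives $(c(f_1)\cdots c(f_{n+1}))^{l}\subseteq I$, and here the hypothesis $I=\sqrt{I}$ is exactly what is needed to conclude $c(f_1)\cdots c(f_{n+1})\subseteq I$.

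The second key step is to invoke the hypothesis on $R$: since $\omega_R(I)=n$, the ideal $I$ is $n$-absorbing, hence by assumption strongly $n$-absorbing, so from $c(f_1)\cdots c(f_{n+1})\subseteq I$ we get $c(f_1)\cdots c(f_{i-1})c(f_{i+1})\cdots c(f_{n+1})\subseteq I$ for some $i$. Finally, $c(f_1\cdots f_{i-1}f_{i+1}\cdots f_{n+1})\subseteq c(f_1)\cdots c(f_{i-1})c(f_{i+1})\cdots c(f_{n+1})\subseteq I$ (using the always-valid inclusion $c(uv)\subseteq c(u)c(v)$ repeatedly), whence $f_1\cdots f_{i-1}f_{i+1}\cdots f_{n+1}\in IB$, as required. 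The main obstacle — and the reason the radical hypothesis is imposed — is precisely the passage from $(c(f_1)\cdots c(f_{n+1}))^l\subseteq I$ back to $c(f_1)\cdots c(f_{n+1})\subseteq I$: without multiplicativity of $c$, Dedekind–Mertens only yields a power, and there is no way to strip the exponent $l$ without radicality of $I$. Everything else is routine manipulation of contents and citation of the Darani–Puczy\l owski and Anderson–Badawi results already recalled.
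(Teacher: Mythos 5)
Your proposal is correct and follows essentially the same route as the paper's own proof: reduce to showing $IB$ is $n$-absorbing when $\omega_R(I)=n$, iterate the Dedekind--Mertens formula to get $(c(f_1)\cdots c(f_{n+1}))^l\subseteq I$, use radicality to strip the exponent, and then apply the strongly $n$-absorbing hypothesis together with $c(uv)\subseteq c(u)c(v)$. No substantive differences to report.
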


\begin{corollary}

Let $R$ be a ring and $I$ a radical ideal of $R$. Then the following statements hold.

\begin{enumerate}

\item If the additive group of the ring $R$ is torsion-free, then $\omega_{R[X]}(I[X])=\omega_R(I)$.

\item If $R$ is a Noetherian ring and the additive group of the ring $R$ is torsion-free, then $\omega_{R[[X]]}(I[[X]])=\omega_R(I)$.

\end{enumerate}

\end{corollary}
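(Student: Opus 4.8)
The plan is to obtain both parts as direct corollaries of the preceding theorem, so the only real work is to check that its hypotheses are met by the two algebras $R[X]$ and $R[[X]]$. Recall the theorem says: if $B$ is a content $R$-algebra, if every $n$-absorbing ideal of $R$ is strongly $n$-absorbing for every positive integer $n$, and if $I$ is a radical ideal of $R$, then $\omega_B(IB)=\omega_R(I)$. The torsion-free hypothesis on the additive group of $R$ is exactly what, by \cite[Theorem 4.2]{DP}, guarantees that every $n$-absorbing ideal of $R$ is strongly $n$-absorbing; this is common to both parts and is already noted in the statement of the theorem itself. So in each case it remains only to exhibit the relevant content $R$-algebra $B$ and to identify $IB$ with the ideal named in the conclusion.

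For part (1), I would take $B=R[X]$, which is the prototypical content $R$-algebra (as recalled in the introduction), with $c$ the usual content of a polynomial. Then $IB=I\cdot R[X]=I[X]$ for every ideal $I$ of $R$, so the theorem applied to the radical ideal $I$ gives $\omega_{R[X]}(I[X])=\omega_B(IB)=\omega_R(I)$.

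For part (2), I would take $B=R[[X]]$. Since $R$ is Noetherian, $R[[X]]$ is a content $R$-algebra by Theorem \ref{Gaussianexample3}(1), and in the course of that proof one sees $c(f)=A_f$ is the ideal generated by the coefficients of $f$, together with the Noetherian identity $I[[X]]=I\cdot R[[X]]$. Hence $IB=I[[X]]$, and applying the theorem to the radical ideal $I$ yields $\omega_{R[[X]]}(I[[X]])=\omega_B(IB)=\omega_R(I)$.

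I do not expect a genuine obstacle: each part is just bookkeeping on top of the theorem. The one point worth flagging is that in part (2) the Noetherian hypothesis is doing real work twice over — it is needed both for $R[[X]]$ to be a content $R$-algebra and for the identification $I[[X]]=I\cdot R[[X]]$ — whereas in part (1) both facts hold for polynomial rings over an arbitrary ring. One might also remark that part (1) is precisely the case of the general theorem that settles the Anderson--Badawi $\omega_{R[X]}(I[X])=\omega_R(I)$ conjecture under the ``torsion-free additive group plus radical ideal'' hypothesis advertised in the abstract.
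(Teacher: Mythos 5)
Your proposal is correct and matches the paper's intended derivation: the corollary is stated as an immediate consequence of the preceding theorem, specializing $B=R[X]$ (always a content $R$-algebra) for part (1) and $B=R[[X]]$ (a content $R$-algebra since $R$ is Noetherian, by Theorem \ref{Gaussianexample3}) for part (2), with the torsion-free hypothesis supplying, via \cite[Theorem 4.2]{DP}, that every $n$-absorbing ideal is strongly $n$-absorbing. Your extra bookkeeping on $IB=I[X]$ and $IB=I[[X]]$ is exactly the identification the paper relies on.
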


\section{Acknowledgments} The author is partly supported by the University of Tehran and wishes to thank Prof. Dara Moazzami for his encouragements.

\bibliographystyle{plain}

\end{document}